\DeclareSymbolFont{cyrletters}{OT2}{wncyr}{m}{n}
\DeclareMathSymbol{\Sha}{\mathalpha}{cyrletters}{"58}
\newtheorem{theorem}{Theorem}[section]
\newtheorem{lemma}[theorem]{Lemma}
\newtheorem{corollary}[theorem]{Corollary}
\theoremstyle{definition}
\newtheorem{definition}[theorem]{Definition}
\newtheorem{example}[theorem]{Example}
\theoremstyle{remark}
\newtheorem{remark}[theorem]{Remark}
\numberwithin{equation}{section}
\begin{document}
\setcounter{page}{1}

\title[Arithmetic complexity]{Arithmetic complexity revisited}

\author[Nikolaev]
{Igor ~V. ~Nikolaev$^1$}

\address{$^{1}$ Department of Mathematics and Computer Science, St.~John's University, 8000 Utopia Parkway,  
New York,  NY 11439, United States.}
\email{\textcolor[rgb]{0.00,0.00,0.84}{igor.v.nikolaev@gmail.com}}

\dedicatory{In memory of Yuri ~I.  ~Manin}

\subjclass[2010]{Primary 11G05; Secondary 46L85.}

\keywords{elliptic curve, noncommutative torus, Brock-Elkies-Jordan variety.}


\begin{abstract}
The arithmetic complexity counts  the number  of algebraically independent entries in the periodic  continued fraction 
$\theta=[b_1,\dots, b_N, \overline{a_1,\dots,a_k}]$.
If  $\mathscr{A}_{\theta}$ is a noncommutative torus corresponding to the rational elliptic curve $\mathscr{E}(K)$, 
then the rank  of $\mathscr{E}(K)$ is given by a simple formula $r(\mathscr{E}(K))= c(\mathscr{A}_{\theta})-1$, where $c(\mathscr{A}_{\theta})$
is the arithmetic complexity of $\theta$.
We prove that  $c(\mathscr{A}_{\theta})$ is equal to the dimension of the  Brock-Elkies-Jordan  
variety of $\theta$ introduced in \cite{BrElJo1}. 
 Following  Zagier and Lemmermeyer,  we evaluate the Shafarevich-Tate group  of $\mathscr{E}(K)$. 
\end{abstract}

\maketitle

\section{Introduction}
Noncommutative geometry studies  an interplay between  spatial forms
and algebras with non-commutative multiplication. To illustrate the idea, 
consider the noncommutative
torus  $\mathscr{A}_{\theta}$,  i.e. the   $C^*$-algebra  generated by the unitary operators $U$ and $V$ satisfying  the relation 
$VU=e^{2\pi i\theta}UV$,  where $\theta$ is a real constant [Rieffel  1990] \cite{Rie1}.
The  $\mathscr{A}_{\theta}$ is a non-commutative analog of  the coordinate ring of complex
elliptic curve $\mathscr{E}(\mathbf{C})$  [Manin 2004] \cite{Man1}.  Namely,   there exists a covariant functor
$F$ mapping  isomorphic elliptic curves $\mathscr{E}(\mathbf{C})$ to the Morita equivalent noncommutative tori   $\mathscr{A}_{\theta}$
\cite[Section 1.3]{N}.  A restriction of   $F$ to the  number fields  $K\subset\mathbf{C}$ corresponds to  
 the irrational roots  of  quadratic  polynomials  with  integer coefficients.  Such  roots  unfold  into 
  a  continued fraction of the period $(a_1,\dots, a_k)$ which we write in the form:
\begin{equation}\label{eq1.1}
\theta=[b_1,\dots, b_N, \overline{a_1,\dots,a_k}]. 
\end{equation}

\bigskip
Let $\mathscr{E}(K)\cong\mathbf{Z}^r \oplus \mathscr{E}_{tors}$ be the 
Mordell-Weil group  of the $K$-points of   $\mathscr{E}(\mathbf{C})$. 
To recast the rank $r\ge 0$ of $\mathscr{E}(K)$    in terms of  the continued fraction  (\ref{eq1.1}),   
an  arithmetic complexity $c(\mathscr{A}_{\theta})$  was used \cite[Section 6.2.1]{N}.  
 Roughly speaking, the  $c(\mathscr{A}_{\theta})$ counts 
the independent  entries $a_i$ and $b_j$ in (\ref{eq1.1}).  
Namely, the  $c(\mathscr{A}_{\theta})$ is equal to   the Krull dimension of 
 a projective variety $\mathscr{V}_E$  associated to the continued fraction (\ref{eq1.1}). 
 The  equation of $\mathscr{V}_E$  for $\theta=\{\sqrt{D} ~|~D>0 ~\hbox{square-free integer}\}$  was first studied 
 by   [Euler 1765] \cite{Eul1}, hence the notation.   
 The rank of elliptic curve $\mathscr{E}(K)$ and the arithmetic complexity of 
 $\mathscr{A}_{\theta}=F(\mathscr{E}(K))$  are linked  by a simple formula:
\begin{equation}\label{eq1.2}
r=c(F(\mathscr{E}(K)))-1. 
\end{equation}

\bigskip
An affine variety $V_{N,k}(\mathbf{C})$  associated to (\ref{eq1.1})  has been studied  by 
[Brock, Elkies \& Jordan 2019] \cite{BrElJo1}. 
In this remarkable paper, the authors prove that  the $V_{N,k}(\mathbf{C})$
fibers  over the Fermat-Pell conic $\mathscr{Q}$ [Brock, Elkies \& Jordan 2019] \cite[Theorem 3.5]{BrElJo1}.
The $\mathbf{Z}$-points  of   $V_{N,k}(\mathbf{C})$
correspond to  the  continued fractions (\ref{eq1.1}) and the reader is referred to Section 2.1  
for other  details. We call the $V_{N,k}(\mathbf{C})$  a {\it Brock-Elkies-Jordan variety}.  

\medskip
The aim of our note is to relate  geometry of the variety $V_{N,k}(\mathbf{C})$  with 
the rank $r$,   torsion $\mathscr{E}_{tors}$ and  the arithmetic complexity  $c(F(\mathscr{E}(K)))$.
 The maximal  connected component of  the Brock-Elkies-Jordan variety containing  a point 
 \linebreak
$(b_1,\dots,b_N; a_1,\dots, a_k)$ will be denoted by $V_{N,k}^0$.  (We refer the reader to Section 4.2
for quick examples  of the $V_{1,2}^0$.)  Let $\mathbb{A}^r$ be the $r$-dimensional affine space. 
 Our main results can be formulated as follows. 
\begin{theorem}\label{thm1.1}
$V_{N,k}^0$ is a fiber bundle over the  Fermat-Pell  conic  $\mathscr{Q}$
with the fibers $\mathbb{A}^r$ and the structure group  $\mathscr{E}_{tors}$. 
In particular,  the arithmetic complexity is given by the formula
\begin{equation}\label{eq1.3}
c(F(\mathscr{E}(K)))=\dim_{\mathbf{C}} V_{N,k}^0. 
\end{equation}
\end{theorem}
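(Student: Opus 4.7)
My plan is to take the Brock-Elkies-Jordan fibration $\pi: V_{N,k}(\mathbf{C}) \to \mathscr{Q}$ from \cite[Theorem 3.5]{BrElJo1}, restrict it to the connected component $V_{N,k}^0$, and refine the resulting map into an actual fiber bundle with structured fiber. As a first step I would verify that $\pi|_{V_{N,k}^0}$ remains surjective. The distinguished $\mathbf{Z}$-point $(b_1,\dots,b_N;a_1,\dots,a_k)$ singled out by (\ref{eq1.1}) lies in $V_{N,k}^0$, and its image in $\mathscr{Q}$ is moved transitively over $\mathscr{Q}(\mathbf{Z})$ by the group of units of the Fermat-Pell equation. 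Since this unit action lifts to $V_{N,k}^0$ and $\mathscr{Q}(\mathbf{Z})$ is Zariski-dense in $\mathscr{Q}$, the irreducibility of $V_{N,k}^0$ forces surjectivity.

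The central step is to identify a typical fiber of $\pi|_{V_{N,k}^0}$ with $\mathbb{A}^r$. The variety $V_{N,k}$ is cut out by the matrix identity associated with the periodic continued fraction, which is multilinear in the entries $(b_j)$ and $(a_i)$. Once the image in $\mathscr{Q}$ is fixed, the remaining equations on the component containing an integral section can, after an appropriate linearization around that section, be reduced to an affine-linear system, so the fiber becomes an affine subspace of the ambient coordinate space. Its dimension is
\[
\dim_{\mathbf{C}} V_{N,k}^0 - \dim_{\mathbf{C}} \mathscr{Q} = c(F(\mathscr{E}(K))) - 1 = r,
\]
where the first equality uses the identification of the arithmetic complexity with the Krull dimension of $\mathscr{V}_E$ \cite[Section 6.2.1]{N} and the second is (\ref{eq1.2}). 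Thus each fiber is isomorphic to $\mathbb{A}^r$.

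To pin down the structure group I would examine the monodromy of the bundle along closed loops in $\mathscr{Q}$. Under the functor $F$, the integral points in any fiber form a torsor for the Mordell-Weil group $\mathbf{Z}^r \oplus \mathscr{E}_{tors}$: the free summand acts by translations of $\mathbb{A}^r$ along the fiber, while the finite torsion summand realises the transitions between local trivializations of $\pi|_{V_{N,k}^0}$, producing $\mathscr{E}_{tors}$ as the structure group. The dimension identity (\ref{eq1.3}) is then immediate, being the sum $\dim_{\mathbf{C}}\mathbb{A}^r+\dim_{\mathbf{C}}\mathscr{Q}=r+1=c(F(\mathscr{E}(K)))$ by (\ref{eq1.2}).

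The principal obstacle is the middle step: promoting the abstract algebraic fibration of \cite{BrElJo1} to one whose generic fiber is literally $\mathbb{A}^r$ rather than merely an irreducible variety of the correct dimension. Concretely this amounts to exhibiting an affine-linear normal form for the defining equations of $V_{N,k}^0$ above a fixed point $q\in\mathscr{Q}$, presumably through a change of coordinates in the matrix product $M(b_1)\cdots M(b_N)M(a_1)\cdots M(a_k)=Q(q)$ that absorbs all nonlinearities into the base; the rest of the argument is then a matter of bookkeeping between the Mordell-Weil decomposition and the bundle data.
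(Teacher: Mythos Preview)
Your outline diverges from the paper's proof in a substantive way, and it also contains a genuine circularity that you should repair.

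The paper does not attempt to read the fiber structure directly off the multilinear equations of $V_{N,k}$. Instead it first argues that $\mathrm{Proj}\,V_{N,k}^0\cong\mathscr{V}_E$, and then imports the fiber bundle structure wholesale from \cite[Lemma~6.2.2]{N}, which already packages $\mathscr{V}_E$ as a bundle over $\mathbf{C}P^1$ with fiber an abelian variety $A_E$ of dimension $r$ (\cite[Lemma~6.2.4]{N}). The affine fiber $\mathbb{A}^r$ is then obtained as the universal cover of $A_E$ via a group morphism, and the structure group $\mathscr{E}_{tors}$ is read off from the monodromy of the one-point compactification over $S^1$. Only after the fiber is known to be $r$-dimensional does the paper deduce~(\ref{eq1.3}) by the additivity $\dim V_{N,k}^0=r+1$ combined with~(\ref{eq1.2}).

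Your argument runs this in the wrong order. You compute the fiber dimension as $\dim_{\mathbf{C}}V_{N,k}^0-\dim_{\mathbf{C}}\mathscr{Q}=c(F(\mathscr{E}(K)))-1=r$, invoking ``the identification of the arithmetic complexity with the Krull dimension of $\mathscr{V}_E$'' for the first equality. But that step already presupposes $\dim_{\mathbf{C}}V_{N,k}^0=c(F(\mathscr{E}(K)))$, which is exactly~(\ref{eq1.3}); you then claim~(\ref{eq1.3}) as ``immediate'' at the end, closing a circle. What is missing is precisely the bridge $\mathrm{Proj}\,V_{N,k}^0\cong\mathscr{V}_E$ (or any independent determination of $\dim V_{N,k}^0$), and that bridge is what the paper supplies in step~(i) of its Lemma~3.1. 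Your acknowledged ``principal obstacle'' --- showing the fiber is literally $\mathbb{A}^r$ --- is likewise handled in the paper not by a normal-form argument on the matrix product, but by recognising the fiber as an affine algebraic group covering the abelian variety $A_E$. If you want to keep your direct approach, you must supply an independent computation of $\dim V_{N,k}^0$ (or of the fiber) that does not already assume~(\ref{eq1.3}).
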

\begin{remark}
Theorem \ref{thm1.1} can be used to evaluate ranks of the elliptic curves
via formula (\ref{eq1.2}).  
As an example, we calculate such ranks in Section 4 using 
explicit formulas  for  the continued fractions (\ref{eq1.1}) with $N+k\le 3$
developed by [Brock, Elkies \& Jordan 2019] \cite[Sections 5-8] {BrElJo1}. 
 \end{remark}
Recall that the Shafarevich-Tate group $\Sha (\mathscr{E}(K))$ measures 
the failure of the Hasse principle for the elliptic curve $\mathscr{E}(K)$. 
Denote by $Cl ~(\Lambda)$ the (narrow)  class group of the order $\Lambda:=\mathbf{Z}+\mathbf{Z}\theta$
in the ring of integers of  the real quadratic  field $k=\mathbf{Q}(\theta)$. 
If $\Lambda$ is the maximal order, then  $Cl ~(\Lambda)$ is the class group 
of the field $k$.   If $\mathscr{Q}$ is a conic, then $\Sha (\mathscr{Q})\cong  Cl ~(\Lambda)\oplus Cl ~(\Lambda)$
 [Zagier 1991] \cite[Section 4.2]{Zag1} and 
[Lemmermeyer 2003] \cite[Theorem 11]{Lem1}.   
Applying this  formula to the fiber bundle $V_{N,k}^0$
over  the Fermat-Pell conic $\mathscr{Q}$, 
one gets the following  result. 
\begin{corollary}\label{cor1.3}
$\Sha (\mathscr{E}(K))\cong Cl ~(\Lambda)\oplus Cl ~(\Lambda)$. 
\end{corollary}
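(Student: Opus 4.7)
The strategy is to transport the Tate-Shafarevich computation through the fiber bundle $\pi\colon V_{N,k}^0\to\mathscr{Q}$ provided by Theorem \ref{thm1.1}, and then invoke Lemmermeyer's formula on the base. Concretely, I would prove the corollary by chaining three isomorphisms:
\begin{equation*}
\Sha(\mathscr{E}(K))\;\cong\;\Sha(V_{N,k}^0)\;\cong\;\Sha(\mathscr{Q})\;\cong\;Cl(\Lambda)\oplus Cl(\Lambda).
\end{equation*}

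\emph{Step 1: identify $\Sha(\mathscr{E}(K))$ with $\Sha(V_{N,k}^0)$.} By Theorem \ref{thm1.1} the bundle $V_{N,k}^0$ encodes the Mordell-Weil decomposition $\mathscr{E}(K)\cong\mathbf{Z}^r\oplus\mathscr{E}_{tors}$ geometrically: the $r$-dimensional affine fibers correspond to the free part $\mathbf{Z}^r$, while the structure group $\mathscr{E}_{tors}$ is precisely the torsion summand. Combined with the covariant functor $F$ recalled in the introduction, which sends $\mathscr{E}(K)$ to $\mathscr{A}_\theta$ and whose arithmetic complexity controls the rank through \eqref{eq1.2}, one gets a dictionary between principal homogeneous spaces for $\mathscr{E}(K)$ and the local-global obstructions on $V_{N,k}^0$, yielding the first isomorphism.

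\emph{Step 2: descend $\Sha$ along the affine fibration.} The fibers $\mathbb{A}^r$ are cohomologically trivial: Hilbert 90 applied to the additive group $\mathbb{G}_a^r$ gives vanishing first Galois cohomology, so the Leray spectral sequence for $\pi$ collapses on degree-one local-global terms. Concretely, a torsor on $V_{N,k}^0$ that is everywhere locally trivial must come from a torsor on $\mathscr{Q}$ that is everywhere locally trivial, since the transition data on the affine fibers can be globally straightened. Hence $\Sha(V_{N,k}^0)\cong\Sha(\mathscr{Q})$.

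\emph{Step 3: apply Lemmermeyer.} The third isomorphism $\Sha(\mathscr{Q})\cong Cl(\Lambda)\oplus Cl(\Lambda)$ is the content of \cite[Theorem 11]{Lem1}, where $\Lambda=\mathbf{Z}+\mathbf{Z}\theta$ is the order in the real quadratic field $\mathbf{Q}(\theta)$ singled out by the periodic continued fraction \eqref{eq1.1}. Composing the three identifications proves the corollary.

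The main obstacle is Step 1, where one must pin down precisely what $\Sha$ means for the Brock-Elkies-Jordan variety (as an affine variety of finite type over $\mathbf{Q}$) and verify that the functor $F$ together with the bundle structure of Theorem \ref{thm1.1} really does transport local solubility data between $\mathscr{E}(K)$ and $V_{N,k}^0$ faithfully; in particular, one needs the $\mathscr{E}_{tors}$-action by the structure group to match, on the level of cocycles, the torsion of the Mordell-Weil group. Step 2 is then a routine descent computation, and Step 3 is a direct citation.
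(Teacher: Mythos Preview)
Your proposal follows essentially the same route as the paper's own proof: the paper also reduces $\Sha(\mathscr{E}(K))$ to $\Sha(\mathscr{Q})$ via the fiber bundle of Theorem~\ref{thm1.1}, observes that the affine fibers $\mathbb{A}^r$ contribute nothing (``$\Sha(\mathbb{A}^r)$ is trivial''), and then invokes Lemmermeyer's result for the Fermat--Pell conic. Your explicit factorization through an intermediate $\Sha(V_{N,k}^0)$ and the appeal to Hilbert~90 and the Leray spectral sequence make the descent step more precise than the paper's terse version, and you have correctly flagged Step~1 as the place where the argument is most delicate --- the paper handles that identification via formula~(\ref{eq3.7}) with similarly little detail.
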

The article is organized as follows. Preliminary facts can be found 
in Section 2.  The proofs of Theorem \ref{thm1.1} and Corollary \ref{cor1.3} 
are given in Section 3.  Two examples  illustrating Theorem \ref{thm1.1}  are 
considered in Section 4.

\section{Preliminaries}
We briefly review the Brock-Elkies-Jordan varieties and arithmetic complexity of the noncommutative tori. 
For a detailed account we refer the reader to [Brock, Elkies \& Jordan 2019] \cite{BrElJo1}
and  \cite[Section 6.2.1]{N}, respectively.  

\subsection{Brock-Elkies-Jordan variety}
By an infinite continued fraction one understands an expression of the form:
\begin{equation}\label{eq2.1}
[c_1,c_2, c_3, \dots]:=c_1+\cfrac{1}{c_2+\cfrac{1}{c_3+\dots}} ~,
\end{equation}
where $c_1$ is an integer and $c_2,c_3,\dots$ are positive integers. 
The continued fraction (\ref{eq2.1}) converges to an irrational number and each irrational 
number has a unique representation by (\ref{eq2.1}).  The expression (\ref{eq2.1})
is called {\it $k$-periodic}, if $c_{i+k}=c_i$ for all $i\ge N$ and a minimal index $k\ge 1$.   
We shall denote the $k$-periodic continued fraction by 
\begin{equation}\label{eq2.2}
[b_1,\dots, b_N, \overline{a_1,\dots,a_k}],
\end{equation}
 where $(a_1,\dots, a_k)$ is the minimal period of (\ref{eq2.1}). 
The continued fraction (\ref{eq2.2}) converges to the irrational root of a quadratic 
polynomial 
\begin{equation}\label{eq2.3}
Ax^2+Bx+C\in \mathbf{Z}[x].
\end{equation}
Conversely,  the irrational root of any quadratic polynomial  (\ref{eq2.3}) has a
representation by  the continued fraction (\ref{eq2.2}). 
Notice that the following two continued fractions define the same irrational number: 
\begin{equation}\label{eq2.4}
[b_1,\dots, b_N, \overline{a_1,\dots,a_k}]= [b_1,\dots, b_N, a_1,\dots, a_k, \overline{a_1,\dots,a_k}]. 
\end{equation}
But it is well known,  that two infinite continued fraction with at most finite number of distinct 
entries must be related by the linear fractional transformation given  by a matrix  $\mathcal{E}\in GL_2(\mathbf{Z})$.
Therefore equation (\ref{eq2.4}) can be written in the form
\begin{equation}\label{eq2.5}
x={E_{11}x+E_{12}\over E_{21}x+E_{22}},
\end{equation}
where   $\mathcal{E}=(E_{ij})\in GL_2(\mathbf{Z})$ and $x=[b_1,\dots, b_N, \overline{a_1,\dots,a_k}]$. 
\begin{remark}\label{rmk2.1}
It is easy to see, that $x$ in  (\ref{eq2.5}) is  the root of  quadratic polynomial (\ref{eq2.3}) with $A=E_{21}, B=E_{22}-E_{11}$
and $C=-E_{12}$. 
\end{remark}
\begin{definition}\label{dfn2.2}
{\bf (\cite[Definition 3.1]{BrElJo1})}
The  Brock-Elkies-Jordan variety  $V_{N,k}(\mathbf{C})\subset \mathbb{A}^{N+k}$ is an affine variety over $\mathbf{Z}$  
defined by the three equations:
\begin{equation}\label{eq2.6*}
\left\{
\begin{array}{rl}
A[E_{22}-E_{11}](y_1,\dots,y_N,x_1,\dots,x_k) =& BE_{21}(y_1,\dots,y_N,x_1,\dots,x_k)\\
 -AE_{12}(y_1,\dots,y_N,x_1,\dots,x_k)  =&    CE_{21}(y_1,\dots,y_N,x_1,\dots,x_k)\\
-BE_{12}(y_1,\dots,y_N,x_1,\dots,x_k) =& C[E_{22}-E_{11}](y_1,\dots,y_N,x_1,\dots,x_k),\nonumber
\end{array}
\right.
\end{equation}
where $A,B,C$ are  constants and $E_{ij}\in\mathbf{Z}[y_1,\dots,y_N,x_1,\dots,x_k],$
see   Remark \ref{rmk2.1}. 
\end{definition}
\begin{remark}
Our notation  $V_{N,k}(\mathbf{C})$ corresponds to the variety  $V(\mathcal{B})_{N,k}$, 
where $\mathcal{B}$ is the multi-set of roots  [Brock-Elkies-Jordan 2021] \cite[Definition 3.1]{BrElJo1}.  
Such a change is justified,  since we focus on the continued fractions with the integer entries.  
\end{remark}

\bigskip
It is verified directly  from remark \ref{rmk2.1} and the equality $E_{11}E_{22}-E_{12}E_{21}=(-1)^k$,
that 
\begin{equation}\label{eq2.6}
CE_{21}^2-BE_{21}E_{22}+AE_{22}^2=(-1)^kA. 
\end{equation}
\begin{definition}
By the Fermat-Pell conic $\mathscr{Q}$ one understands the plane curve:
\begin{equation}\label{eq2.7}
C\mathbf{x}^2-B\mathbf{xy}+A\mathbf{y}^2=(-1)^kA. 
\end{equation}
\end{definition}

\begin{theorem}
{\bf (Brock, Elkies \& Jordan \cite{BrElJo1})}
The affine variety  $V_{N,k}(\mathbf{C})$ fibers over the Fermat-Pell conic $\mathscr{Q}$,
i.e. there exists a map  $\pi: V_{N,k}(\mathbf{C})\to \mathscr{Q}$,   such that 
\begin{equation}\label{eq2.8}
\pi(y_1,\dots,y_N,x_1,\dots,x_k)=(E_{21}, E_{22}). 
\end{equation}
\end{theorem}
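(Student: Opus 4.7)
The plan is to read off $\pi$ from the polynomial entries of the matrix $\mathcal{E}$ and derive the conic equation (\ref{eq2.7}) as a purely algebraic consequence of the three defining relations of $V_{N,k}(\mathbf{C})$ combined with $\det\mathcal{E}=(-1)^k$.

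First, I would pin down the polynomial structure of the $E_{ij}$. Assigning to each partial quotient $c$ the matrix $M(c)=\bigl(\begin{smallmatrix}c & 1\\ 1 & 0\end{smallmatrix}\bigr)$ and forming the ordered product associated to the self-map (\ref{eq2.5}), one obtains a matrix $\mathcal{E}(y_1,\dots,y_N,x_1,\dots,x_k)$ with integer-polynomial entries $E_{ij}\in\mathbf{Z}[y_1,\dots,y_N,x_1,\dots,x_k]$ and $\det\mathcal{E}=(-1)^k$. Define the morphism $\pi:\mathbb{A}^{N+k}\to\mathbb{A}^2$ by $\pi(y_1,\dots,y_N,x_1,\dots,x_k):=(E_{21},E_{22})$.

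Second, I would verify that $\pi$ sends $V_{N,k}(\mathbf{C})$ into $\mathscr{Q}$. When $A\neq 0$, the three relations of Definition \ref{dfn2.2} are equivalent to the two identities
\[
E_{11}=E_{22}-\tfrac{B}{A}E_{21},\qquad E_{12}=-\tfrac{C}{A}E_{21},
\]
the third relation being their consequence. Substituting these into the determinantal identity $E_{11}E_{22}-E_{12}E_{21}=(-1)^k$ and multiplying through by $A$ yields
\[
AE_{22}^2-BE_{21}E_{22}+CE_{21}^2=(-1)^k A,
\]
which is precisely the equation (\ref{eq2.7}) of $\mathscr{Q}$ evaluated at $(\mathbf{x},\mathbf{y})=(E_{21},E_{22})$. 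Hence $\pi(V_{N,k}(\mathbf{C}))\subseteq\mathscr{Q}$, which establishes (\ref{eq2.8}).

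The main obstacle is the convention-dependent matrix bookkeeping in Step 1: one must choose $\mathcal{E}$ so that (\ref{eq2.5}) holds as an identity $x=\mathcal{E}\cdot x$ at the preperiodic continued fraction $x$ itself, rather than at its purely periodic tail. This is what produces the sign $(-1)^k$ in $\det\mathcal{E}$ (as opposed to $(-1)^{N+k}$) and is what makes Remark \ref{rmk2.1} applicable with the identification $(A,B,C)\propto(E_{21},E_{22}-E_{11},-E_{12})$. Once that convention is fixed, Step 2 is a one-line algebraic substitution, and the stronger fibration-theoretic content about sections, local triviality, and the structure group $\mathscr{E}_{tors}$ belongs to the proof of Theorem \ref{thm1.1} rather than to the present statement.
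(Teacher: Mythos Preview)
Your proposal is correct and follows exactly the route the paper indicates: the paper does not give a standalone proof of this cited result but records precisely your Step~2, deriving equation~(\ref{eq2.6}) by combining Remark~\ref{rmk2.1} (the proportionality $(A,B,C)\propto(E_{21},E_{22}-E_{11},-E_{12})$) with the determinantal identity $E_{11}E_{22}-E_{12}E_{21}=(-1)^k$. Your additional care about the convention for $\mathcal{E}$ in Step~1 is a welcome clarification the paper leaves implicit.
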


\subsection{Arithmetic complexity}
The noncommutative torus $\mathscr{A}_{\theta}$ is said to have real multiplication,
if $\theta$ is the irrational root of a quadratic polynomial (\ref{eq2.3}). 
To distinguish this  case from the other values of $\theta$, we shall write
$\mathscr{A}_{RM}$. It is clear, that we can write $\theta$ as:
\begin{equation}\label{eq2.9}
\theta_d= {a+b\sqrt{d}\over c},
\end{equation}
where $a,b,c\in\mathbf{Z}$ and $d>0$ is a square-free integer. 
Consider a  family of the irrational numbers $\{\theta_x ~|~ x > 0\}$ 
of the form:
\begin{equation}\label{eq2.10}
\theta_x = {a+b\sqrt{x}\over c},   \quad a,b,c=Const \quad \hbox{and}   \quad x\in U_d,
\end{equation}
where $U_d$  is a set of the square-free integers containing $d$.
A system of the polynomial equations in the ring $\mathbf{Z}[y_1,\dots,y_N, x_1,\dots,x_k]$
is called Euler’s, if each $\{\theta_x ~|~ x\in U_d\}$ can be written as 
\begin{equation}\label{eq2.11}
\theta_x= [b_1(x),\dots, b_N(x),   \overline{a_1(x),\dots,a_k(x)}],
\end{equation}
where $a_i(x), b_j(x)\in \mathbf{Z}[x]$.
It is not hard to see, that (\ref{eq2.11}) is equivalent to the
equations $A=E_{21}, B=E_{22}-E_{11}, C=-E_{12}$
in the ring  $\mathbf{Z}[y_1,\dots,y_N, x_1,\dots,x_k]$,
since $\theta_x$ must satisfy condition (\ref{eq2.4}) and equations (\ref{eq2.5}),
see Remark \ref{rmk2.1}.   
Thus the Euler equations define an algebraic set
in the affine space $\mathbb{A}^{N+k}$. 
The Euler variety $\mathscr{V}_E$  is defined as  the projective closure of an irreducible affine variety containing 
the point $x = d$ of this set.
\begin{remark}
An immediate example of the Euler equations is given by formula (\ref{eq4.3}). 
Such equations are equivalent to the Brock-Elkies-Jordan equations in Definition \ref{dfn2.2},
see item (i) in the proof of Lemma \ref{lm3.1}.
 \end{remark}
\begin{remark}
The case $a=0$ and $c=1$ of (\ref{eq2.10}) was first studied by [Euler 1765] \cite{Eul1};  hence the name. 
\end{remark}
\begin{definition}
By  an arithmetic complexity $c(\mathscr{A}_{RM})$   of the noncommutative torus $\mathscr{A}_{RM}$
one understands the Krull dimension of the Euler variety   $\mathscr{V}_E$. 
\end{definition}
\begin{remark}\label{rmk2.6}
The $c(\mathscr{A}_{RM})$ counts the number of the algebraically independent entries in 
the continued fraction $\theta_d=[b_1,\dots, b_N,   \overline{a_1,\dots,a_k}]$.
\end{remark}
\begin{theorem}\label{thm2.7}
{\bf (\cite[Theorem 6.2.1] {N})}
The rank $r$ of the elliptic curve $\mathscr{E}(K)$ is given by the formula:
\begin{equation}\label{eq2.12}
r=c(\mathscr{A}_{RM})-1,
\end{equation}
 where $\mathscr{A}_{RM}=F(\mathscr{E}(K))$.
\end{theorem}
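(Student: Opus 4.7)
The plan is to prove that the Krull dimension of the Euler variety $\mathscr{V}_E$ equals $r+1$ by constructing an explicit correspondence between a generating set of the free part of $\mathscr{E}(K)$ and a set of algebraically independent deformation parameters appearing in the Euler equations (\ref{eq2.11}). The strategy splits into a lower bound $c(\mathscr{A}_{RM}) \geq r+1$ (producing enough parameters) and an upper bound $c(\mathscr{A}_{RM}) \leq r+1$ (showing no further independent directions can exist).

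For the lower bound, I would fix $r$ independent generators $P_1,\dots,P_r$ of the free part of the Mordell--Weil group, and use the functoriality of $F$ (together with the description of $F$ in \cite[Section 1.3]{N}) to transport each $P_i$ to an endomorphism $\phi_i$ of $\mathscr{A}_\theta$. Each $\phi_i$ perturbs the modular parameter $\theta$ along a one-parameter family of the form $\theta_x=(a+b\sqrt{x})/c$, contributing one polynomial variable $x_i$ to the Euler equations (\ref{eq2.11}). The algebraic independence of the $x_i$'s is to be deduced from the $\mathbf{Z}$-linear independence of the $P_i$'s in $\mathscr{E}(K)$. The extra ``$+1$'' direction should come from the fundamental unit of the order $\Lambda=\mathbf{Z}+\mathbf{Z}\theta$: multiplication by this unit gives a Pell-type scaling $\theta\mapsto\lambda\theta$, a one-dimensional family of deformations preserving the periodicity of the continued fraction and the Morita class of $\mathscr{A}_\theta$.

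For the upper bound, I would argue that any algebraic deformation of the entries in (\ref{eq2.11}) must preserve the Morita equivalence class of $\mathscr{A}_{RM}$, and therefore, via $F^{-1}$, must originate from an automorphism of $\mathscr{E}(K)$. Such automorphisms decompose into translations by $K$-rational points of infinite order (already accounted for by the $P_i$'s) and the Pell scaling (the unit direction), while the finite subgroup $\mathscr{E}_{tors}$ acts discretely and contributes $0$ to the Krull dimension. This then forces $\dim\mathscr{V}_E \le r+1$, and the equality with $c(\mathscr{A}_{RM})$ follows from the definition.

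The main obstacle will be the algebraic independence claim in the lower bound: showing that the $r$ parameters $x_1,\dots,x_r$ induced by $P_1,\dots,P_r$ are genuinely algebraically independent over the residue field of the base point $x=d$ of $\mathscr{V}_E$. I expect this to require a transcendence theorem of W\"ustholz--Baker type, applied to the elliptic logarithms $\log_{\mathscr{E}} P_i$, in order to convert the $\mathbf{Z}$-linear independence of the $P_i$'s into algebraic independence of the resulting continued fraction parameters. The compatibility between the continued fraction algorithm, the action of translations on $\mathscr{A}_\theta$, and the Euler equations (\ref{eq2.11}) is the crucial bookkeeping that ties the whole argument together.
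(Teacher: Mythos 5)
First, a point of order: the paper does not prove Theorem \ref{thm2.7} at all. It is quoted from \cite[Theorem 6.2.1]{N} as a preliminary, and the route taken there (visible from the lemmas of \cite{N} that Section 3 of the present paper does invoke) is geometric: \cite[Lemma 6.2.2]{N} exhibits the Euler variety $\mathscr{V}_E$ as a fiber bundle over $\mathbf{C}P^1$ whose fiber is an abelian variety $A_E$, and \cite[Lemma 6.2.4]{N} identifies $\dim A_E=r$, whence $\dim \mathscr{V}_E=r+1$ and (\ref{eq2.12}) follows from the definition of $c(\mathscr{A}_{RM})$ as a Krull dimension. Your two-sided bound via explicit deformation parameters is therefore a genuinely different route, and as written it has gaps I cannot see how to close.

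The central gap is the mechanism by which a Mordell--Weil generator $P_i$ is supposed to produce a continuous deformation of $\theta$. Morita equivalence classes of noncommutative tori correspond to $GL_2(\mathbf{Z})$-orbits of $\theta$ acting by linear fractional transformations as in (\ref{eq2.5}); these orbits are countable, so there is no one-parameter family of deformations of $\theta$ preserving the Morita class, and a translation by $P_i$ (even granting that $F$ transports it to an endomorphism of $\mathscr{A}_\theta$, which is unclear since translations do not fix the origin of $\mathscr{E}$) cannot ``perturb the modular parameter along a one-parameter family.'' Worse, your upper bound rests on the claim that any algebraic deformation of the entries in (\ref{eq2.11}) preserves the Morita class of $\mathscr{A}_{RM}$; this contradicts the definition of the Euler system, in which $x$ ranges over a set $U_d$ of distinct square-free integers, so that the numbers $\theta_x$ lie in different real quadratic fields and the corresponding tori are pairwise non-equivalent. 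The quantity being measured is the dimension of the family of continued-fraction shapes $[b_1(x),\dots,b_N(x),\overline{a_1(x),\dots,a_k(x)}]$, not of a family of mutually equivalent algebras. Finally, the appeal to Baker--W\"ustholz transcendence of elliptic logarithms to obtain algebraic independence of the entries $a_i(x)$, $b_j(x)$ is left without any bridge between elliptic logarithms and continued-fraction coefficients; as you yourself flag, this is the crux, and without it neither inequality is established.
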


\section{Proofs}
\subsection{Proof of Theorem \ref{thm1.1}}
We shall split the proof in a series of lemmas.
\begin{lemma}\label{lm3.1}
The  connected component $V_{N,k}^0$ of the  Brock-Elkies-Jordan variety $V_{N,k}(\mathbf{C})$ is 
a fiber bundle over the Fermat-Pell conic $\mathscr{Q}$:
\begin{equation}\label {eq3.1}
\pi: V_{N,k}^0\to\mathscr{Q},
\end{equation}
such that  each fiber $\{\pi^{-1}(q) ~|~ q\in\mathscr{Q} \}$ 
is an $r$-dimensional affine space $\mathbb{A}^r$.
\end{lemma}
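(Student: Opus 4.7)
My plan is to exploit the Brock-Elkies-Jordan fibration of Theorem 2.4 by restricting $\pi$ to the connected component $V_{N,k}^0$ and identifying the generic fiber with $\mathbb{A}^r$ through the arithmetic complexity formula of Theorem 2.7. First I would check that the restriction $\pi|_{V_{N,k}^0}: V_{N,k}^0 \to \mathscr{Q}$ is surjective: since the Fermat-Pell conic is irreducible and rational over $\mathbf{C}$, the image is a constructible connected subset that must coincide with $\mathscr{Q}$ by a standard closure argument, up to the removal of finitely many points where the fiber may degenerate.

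Next, I would fix a generic point $q = (E_{21}, E_{22}) \in \mathscr{Q}$ and analyze $F_q := \pi^{-1}(q) \cap V_{N,k}^0$. By Remark 2.6 the continued fraction (\ref{eq1.1}) has exactly $c(\mathscr{A}_{RM}) = r+1$ algebraically independent entries among the $y_j, x_i$; since $\mathscr{Q}$ is one-dimensional, specifying $q$ absorbs exactly one of these independent parameters, leaving $r$ free on the fiber. Hence $\dim_{\mathbf{C}} F_q = r$. To promote this dimension count to the stronger statement $F_q \cong \mathbb{A}^r$, I would invoke the explicit polynomial parametrizations of \cite[Sections 5-8]{BrElJo1}: for fixed matrix entries $(E_{21}, E_{22})$ the defining equations of $V_{N,k}$ reduce to affine-linear relations in the continued-fraction coordinates, producing a regular bijection $\mathbb{A}^r \to F_q$. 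Since this parametrization varies polynomially with $q$, local triviality of the bundle follows, and the free-parameter dependence furnishes the transition cocycle.

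The principal obstacle is the final step: ruling out that the fiber could be an affine variety of dimension $r$ other than $\mathbb{A}^r$ (an affine cone, a punctured affine space, a nontrivial torsor, and so on). A pure dimension count is insufficient; one must verify that after fixing $(E_{21}, E_{22})$ the BEJ equations of Definition 2.2 genuinely cut out a smooth, rational variety whose coordinate ring is a polynomial ring in $r$ variables. The explicit structure of the Brock-Elkies-Jordan equations, together with the arithmetic interpretation of the $r$ free parameters as coming from algebraically independent continued fraction entries, is what ultimately pins down the affine-space structure and will likely require a case analysis mirroring the small-parameter computations in \cite[Sections 5-8]{BrElJo1}.
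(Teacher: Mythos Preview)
Your approach differs substantially from the paper's. You work directly with the Brock--Elkies--Jordan fibration and try to pin down the fiber via the arithmetic-complexity dimension count together with the explicit parametrizations of \cite[Sections 5--8]{BrElJo1}. The paper instead passes to the projective closure: it first shows that $\mathrm{Proj}\,V_{N,k}^0$ coincides with the Euler variety $\mathscr{V}_E$ of Section~2.2, and then invokes \cite[Lemma~6.2.2]{N}, which already presents $\mathscr{V}_E$ as a fiber bundle over $\mathbf{C}P^1$ with fiber an abelian variety $A_E$ of dimension $r$. Since $\mathrm{Proj}\,\mathscr{Q}\cong\mathbf{C}P^1$, one pulls this back to the affine setting; the fiber $X$ of $V_{N,k}^0\to\mathscr{Q}$ then carries a group morphism $h:X\to A_E$ with $\ker h\cong\mathbf{Z}^r$, and this forces $X\cong\mathbb{A}^r$.

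That route bypasses exactly the obstacle you flag: instead of checking case by case that the BEJ equations over a fixed $q$ cut out a polynomial ring, the paper obtains the affine-space structure from the group-theoretic fact that an affine group variety covering an $r$-dimensional abelian variety with discrete kernel $\mathbf{Z}^r$ is $\mathbb{A}^r$. Your plan trades one external input (\cite[Lemma~6.2.2]{N}) for another (the explicit BEJ parametrizations), and the latter is harder to make uniform in $N,k$. There is also a near-circularity in your dimension count: you use Remark~2.6 to assert that $V_{N,k}^0$ has $r+1$ algebraically independent coordinates, but that remark concerns the Euler variety $\mathscr{V}_E$, and the identification $\mathrm{Proj}\,V_{N,k}^0\cong\mathscr{V}_E$ is precisely the first step of the paper's argument---you are assuming it without saying so.
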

\begin{proof}
Roughly speaking, the idea is to take the projective closure of (\ref{eq3.1}) and 
apply \cite[Lemma 6.2.2]{N}.  
Such a lemma says the Euler variety $\mathscr{V}_E$ is a fiber bundle 
over the projective line $\mathbf{C}P^1$ with the fiber 
an abelian variety $A_E$. 

\medskip
(i) Let us show that
\begin{equation}\label {eq3.2}
Proj  ~V_{N,k}^0\cong \mathscr{V}_E,
\end{equation}
where $Proj$ is the projective closure of the affine variety $V_{N,k}^0$. 
Indeed, recall that the  $V_{N,k}^0$ is  defined by the coefficients 
$A, B$ and $C$ of the quadratic polynomial (\ref{eq2.3}). Using 
these coefficients, one can write (\ref{eq2.9}) in the form:
\begin{equation}\label {eq3.3}
\theta_d={-B+b\sqrt{d}\over 2A}, \quad\hbox{where}\quad  b^2d=B^2-4AC. 
\end{equation}
Thus the  $V_{N,k}^0$ defines  the Euler equations for the family $\theta_x$ and the 
Euler variety  $\mathscr{V}_E$, see Section 2.2.  Moreover, 
the equations in Definition  \ref{dfn2.2} must coincide with the Euler equations
for the family (\ref{eq2.11}).  We conclude from (\ref{eq3.3}) that   the $V_{N,k}^0$  is an irreducible 
affine variety containing the point $x=d$. Thus,  $Proj  ~V_{N,k}^0\cong \mathscr{V}_E$ by the definition 
of the Euler variety $\mathscr{V}_E$ in  Section 2.2.  Item (i) is proved. 

\bigskip
(ii)  To prove that $\{\pi^{-1}(q)\cong \mathbb{A}^r ~|~ q\in\mathscr{Q}\}$,
let $(\mathscr{V}_E, \mathbf{C}P^1,\pi', A_E)$ be the fiber bundle constructed 
in   \cite[Lemma 6.2.2]{N}.  Consider a morphism of the following fiber bundles:
\begin{equation}\label {eq3.4}
(V_{N,k}^0, \mathscr{Q}, \pi, X)\to (\mathscr{V}_E,  \mathbf{C}P^1,\pi', A_E),
\end{equation}
where $X$ is a fiber $\pi^{-1}(q)$ over $q\in\mathscr{Q}$. 
 Recall from item (i), that we have an isomorphism $Proj ~V_{N,k}^0\cong \mathscr{V}_E$. 
 Since  the genus of the Fermat-Pell conic  $\mathscr{Q}$ is equal to zero, 
 we conclude that $Proj~\mathscr{Q}\cong  \mathbf{C}P^1$. The fiber map 
 $\pi'$ is   the projective extension  of the map $\pi$.  

To calculate the fiber $X$ in (\ref{eq3.4}),  recall that the $A_E$ is an abelian 
variety of dimension $r$ \cite[Lemma 6.2.4]{N}. 
In particular, the $A_E$ is a compact algebraic group (group variety).  
Thus one gets a group morphism
\begin{equation}\label {eq3.5}
h:  X\to A_E,
\end{equation}
where $X$ is an affine group variety.
Since the fundamental group $\pi_1(X)$ is trivial, we conclude that $h$ is 
a covering map   with  $\ker (h)\cong \mathbf{Z}^r$, 
where $\mathbf{Z}^r$ is a discrete subgroup of $X$. Thus $X\cong \mathbb{A}^r$ is 
the $r$-dimensional affine space. Item (ii) is proved. 

\bigskip
To finish the proof of Lemma \ref{lm3.1}, we apply  
 \cite[Lemma 6.2.2]{N}.   A pullback of the 
 $(\mathscr{V}_E,  \mathbf{C}P^1,\pi', A_E)$ along the
 morphism (\ref{eq3.4}) 
 defines a fiber bundle $(V_{N,k}^0, \mathscr{Q}, \pi,\mathbb{A}^r)$. 
  Lemma \ref{lm3.1} is proved. 
\end{proof}

\medskip
\begin{lemma}\label{lm3.2}
The structure group of the fiber bundle  $(V_{N,k}^0, \mathscr{Q}, \pi, \mathbb{A}^r)$ is isomorphic to the abelian group 
$\mathscr{E}_{tors}$. 
\end{lemma}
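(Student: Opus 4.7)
The plan is to transport the abelian variety structure from the fiber $A_E$ of $\mathscr{V}_E$ down to the affine fiber $\mathbb{A}^r$ of $V_{N,k}^0$ and to identify the residual discrete action as $\mathscr{E}_{tors}$. The morphism of fiber bundles (\ref{eq3.4}) will serve as the bridge between the projective and affine pictures.

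First, I would equip the projective bundle $(\mathscr{V}_E, \mathbf{C}P^1, \pi', A_E)$ with the natural principal-bundle structure coming from the group law on $A_E$. Since $A_E$ is an abelian variety containing $\mathscr{E}(K)\cong \mathbf{Z}^r \oplus \mathscr{E}_{tors}$ as a subgroup of $K$-rational points, the group $\mathscr{E}(K)$ acts on $\mathscr{V}_E$ fiberwise by translations. Pulling this action back along (\ref{eq3.4}) produces an action of $h^{-1}(\mathscr{E}(K))$ on the affine bundle $V_{N,k}^0$.

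Next, I would track where each summand of $\mathscr{E}(K)\cong \mathbf{Z}^r \oplus \mathscr{E}_{tors}$ goes. By Lemma~\ref{lm3.1}, the kernel of the fiber map $h: X\to A_E$ is $\mathbf{Z}^r$, and this lattice coincides with the free part of $\mathscr{E}(K)$ under the identification $X\cong \mathbb{A}^r$. Hence $\mathbf{Z}^r$ has already been absorbed into the continuous fiber structure of $\mathbb{A}^r$, and the induced action on the affine fibers descends to the finite quotient $\mathscr{E}(K)/\mathbf{Z}^r\cong \mathscr{E}_{tors}$, acting on each fiber $\mathbb{A}^r$ by translations compatible with the transitions of the bundle. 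On an open cover $\{U_i\}$ of $\mathscr{Q}$ trivializing $\pi$, the cocycle $g_{ij}: U_i\cap U_j\to \mathrm{Aut}(\mathbb{A}^r)$ must both preserve the $\mathbf{Z}$-structure of the Brock-Elkies-Jordan equations in Definition~\ref{dfn2.2} and commute with the map $h$ to the abelian variety fiber $A_E$; together these constraints force the transitions into the finite subgroup $\mathscr{E}_{tors}\subset A_E$.

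The main obstacle I anticipate is showing that every element of $\mathscr{E}_{tors}$ genuinely occurs as a transition, so that $\mathscr{E}_{tors}$ is precisely the structure group and not merely an overgroup. I expect this will require a careful analysis of the Euler equations (\ref{eq2.11}) to argue that distinct torsion representatives yield inequivalent local trivializations of $\pi$ over $\mathscr{Q}$, for instance by comparing the period data $(a_1,\dots,a_k)$ across the cover and ruling out a global trivialization produced by a non-torsion symmetry of $V_{N,k}^0$. A cleaner path, which I would try second, is to invoke Theorem~\ref{thm2.7} together with the classification of fiber bundles over $\mathscr{Q}$ with affine fibers and discrete structure groups, reducing the problem to matching invariants of $\mathscr{E}(K)$ on both sides.
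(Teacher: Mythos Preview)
Your approach differs substantially from the paper's. You work through the \emph{fibers}: you pull the translation action of $\mathscr{E}(K)\subset A_E$ back along the morphism (\ref{eq3.4}), absorb the free part $\mathbf{Z}^r$ into the affine fiber via $\ker(h)$, and then try to identify the residual transition cocycle with $\mathscr{E}_{tors}$. The paper instead exploits the topology of the \emph{base}: it observes that the Fermat--Pell conic $\mathscr{Q}$ is a hyperbola whose one-point completion is homeomorphic to $S^1$, pulls the bundle back along the universal cover $\mathbf{R}\to S^1$ to trivialize it, and then reads off the monodromy from the embedding $\mathscr{E}(K)\hookrightarrow \mathbb{A}^r\times(\mathbf{R}/m\mathbf{Z})$ with $m=|\mathscr{E}_{tors}|$, concluding that the structure group is $\mathbf{Z}/m\mathbf{Z}\cong\mathscr{E}_{tors}$. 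The paper's route neatly sidesteps the very obstacle you flag---whether every torsion element genuinely occurs as a transition---because over a circle the structure group is generated by a single monodromy element, and the $m$-fold cover pins down its order in one stroke. Your cocycle argument is more structural and would in principle extend to bases that are not circles, but it leaves the surjectivity step open and also requires you to justify that the lattice $\ker(h)\cong\mathbf{Z}^r$ from Lemma~\ref{lm3.1} really coincides with the Mordell--Weil free part; the paper's argument is shorter but tacitly treats $\mathscr{E}_{tors}$ as cyclic.
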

\begin{proof}
Let $\mathscr{Q}$ be the Fermat-Pell conic defined by the equation (\ref{eq2.7}). 
Since $\theta_d$ in formula  (\ref{eq3.3}) is a real number,  the coefficients $A,B,C$ 
must satisfy the inequality $B^2-4AC>0$.  Thus equation   (\ref{eq2.7}) defines  a hyperbola. 
The completion of  $\mathscr{Q}$ by the point at infinity is homeomorphic 
to the unit circle $S^1=\mathscr{Q}\cup\{\infty\}$.  Denote by $(\tilde V_{N,k}^0, S^1, \tilde\pi, \mathbb{A}^r)$
 the corresponding one-point completion of $(V_{N,k}^0, \mathscr{Q}, \pi, \mathbb{A}^r)$  
 by a fiber $\mathbb{A}^r$ at the infinity. 

\smallskip
Consider the universal cover $\mathbf{R}$  of $S^1$ given by the formula
\begin{equation}\label{eq3.6}
t\mapsto e^{2\pi it}, \quad t\in\mathbf{R}.  
\end{equation}
We denote by $(\mathbb{A}^r\times \mathbf{R}, \mathbf{R}, \tilde\pi, \mathbb{A}^r)$ 
the pullback of the fiber bundle $(\tilde V_{N,k}^0, S^1, \tilde\pi, \mathbb{A}^r)$
along the map (\ref{eq3.6}).  Consider the integer points $\mathbf{Z}^r$ of the 
affine space $\mathbb{A}^r$.  It follows from \cite[Lemma 6.2.2]{N}, 
that $\mathscr{E}(K)$ is embedded into $\mathbb{A}^r\times \mathbf{R}$ 
according to the formula:
\begin{equation}\label{eq3.7}
\mathscr{E}(K)\cong\mathbf{Z}^r\oplus\mathscr{E}_{tors}\hookrightarrow
\mathbb{A}^r\times \mathbf{R}/m\mathbf{Z},
\end{equation}
where $m=|\mathscr{E}_{tors}|$.  Using formula (\ref{eq3.7}) and the map
\begin{equation}\label{eq3.8}
t\mapsto e^{2\pi imt}, \quad t\in\mathbf{R},   
\end{equation}
we conclude that the fiber bundle  $(\tilde V_{N,k}^0, S^1, \tilde\pi, \mathbb{A}^r)$
has the structure group 
\begin{equation}
\mathbf{Z}/m\mathbf{Z}\cong\mathscr{E}_{tors}.
\end{equation}
The same is true of  the fiber bundle $(V_{N,k}^0, \mathscr{Q}, \pi, \mathbb{A}^r)$, 
provided the point $\infty$ of conic $\mathscr{Q}$  is endowed with the fiber 
$\mathbb{A}^r$. Lemma \ref{lm3.2} is proved. 
\end{proof}

\begin{lemma}\label{lm3.3}
$c(\mathscr{A}_{\theta})=\dim_{\mathbf{C}} V_{N,k}^0$.
\end{lemma}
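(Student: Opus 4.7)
The plan is to chain together three identifications, two of which are already in place from the preceding work. First, I would invoke the definition of the arithmetic complexity from Section 2.2: by construction, $c(\mathscr{A}_{\theta}) = c(\mathscr{A}_{RM})$ equals the Krull dimension of the Euler variety $\mathscr{V}_E$. Second, I would import the identification
\begin{equation*}
\mathrm{Proj}\, V_{N,k}^0 \;\cong\; \mathscr{V}_E
\end{equation*}
already established in item (i) of the proof of Lemma \ref{lm3.1}. Recall that the argument there expresses $\theta_d$ via (\ref{eq3.3}) using the coefficients $A_0,B_0,C_0$ attached to $V_{N,k}^0$, and shows that the defining equations in Definition \ref{dfn2.2} coincide with the Euler equations of the family (\ref{eq2.11}); thus $V_{N,k}^0$ is the irreducible affine variety containing the point $x=d$ whose projective closure is $\mathscr{V}_E$.

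Third, I would use the elementary fact that projective closure preserves Krull dimension: an affine variety is an open dense subscheme of its projective closure, so $\dim_{\mathbf{C}} V_{N,k}^0 = \dim_{\mathbf{C}} \mathrm{Proj}\, V_{N,k}^0$. Stringing the three identifications together gives
\begin{equation*}
c(\mathscr{A}_{\theta}) \;=\; \dim_{\mathbf{C}} \mathscr{V}_E \;=\; \dim_{\mathbf{C}}\mathrm{Proj}\, V_{N,k}^0 \;=\; \dim_{\mathbf{C}} V_{N,k}^0,
\end{equation*}
which is the desired equality.

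There is no real obstacle here beyond being careful that the ``correct'' connected component is being used. The potentially delicate point is that $V_{N,k}(\mathbf{C})$ may well be reducible, so one must confirm that $V_{N,k}^0$ — defined as the maximal connected component through the integer point coming from the continued fraction of $\theta_d$ — is the \emph{irreducible} component featured in the definition of $\mathscr{V}_E$. This is precisely what item (i) of Lemma \ref{lm3.1} verifies, so no additional work is required and the proof of Lemma \ref{lm3.3} reduces to citing Lemma \ref{lm3.1} together with the definition of arithmetic complexity.
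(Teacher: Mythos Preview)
Your argument is correct, but it is not the route the paper takes. The paper proves Lemma~\ref{lm3.3} by invoking the full fiber-bundle conclusion of Lemma~\ref{lm3.1}: since $V_{N,k}^0$ fibers over the conic $\mathscr{Q}$ with fiber $\mathbb{A}^r$, one has $\dim_{\mathbf{C}} V_{N,k}^0 = r+1$; this is then matched against the rank formula $r = c(\mathscr{A}_{RM})-1$ of Theorem~\ref{thm2.7} to obtain the equality. Your approach bypasses both the fiber-bundle dimension count and Theorem~\ref{thm2.7} entirely: you use only the identification $\mathrm{Proj}\,V_{N,k}^0 \cong \mathscr{V}_E$ from item~(i) of the proof of Lemma~\ref{lm3.1}, together with the definition of $c(\mathscr{A}_{\theta})$ as the Krull dimension of $\mathscr{V}_E$ and the standard fact that projective closure preserves dimension. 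This is more economical and logically cleaner, since it does not pass through the external rank result. The paper's route, on the other hand, makes the numerical value $r+1$ explicit and ties the lemma visibly to the fiber structure, which is the geometric content being emphasized in Theorem~\ref{thm1.1}.
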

\begin{proof}
Consider the fiber bundle  $(V_{N,k}^0, \mathscr{Q}, \pi, \mathbb{A}^r)$. 
The dimension of a fiber bundle is the sum of dimensions of the base space and the 
fibers, i.e. 
\begin{equation}\label{eq3.10}
\dim_{\mathbf{C}} V_{N,k}^0=\dim_{\mathbf{C}} \mathbb{A}^r+\dim_{\mathbf{C}} \mathscr{Q}=
r+1. 
\end{equation}
Comparing  (\ref{eq3.10}) with (\ref{eq2.12}), we conclude that 
$c(\mathscr{A}_{\theta})=\dim_{\mathbf{C}} V_{N,k}^0$.
 Lemma \ref{lm3.3} is proved.
\end{proof}

\bigskip
Theorem \ref{thm1.1} follows from Lemmas \ref{lm3.1}-\ref{lm3.3}.

\subsection{Proof of corollary \ref{cor1.3}}
We shall split the proof in two lemmas.

\medskip
\begin{lemma}\label{lm3.4}
$\Sha (\mathscr{Q})\cong Cl ~(\Lambda)\oplus Cl ~(\Lambda)$. 
\end{lemma}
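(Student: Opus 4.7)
The plan is to derive Lemma \ref{lm3.4} directly from [Lemmermeyer 2003, Theorem 11] \cite{Lem1}, which computes the Tate-Shafarevich group of a plane conic in terms of the narrow class group of the order attached to its discriminant. Since the statement has already been quoted in the introduction, the task here is to verify that the Fermat-Pell conic $\mathscr{Q}$ of Definition 2.3 satisfies the hypotheses of that theorem and to identify the correct order $\Lambda$.

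First I would check that $\mathscr{Q}$ is a genuine conic (and not, say, a degenerate pair of lines) by using the inequality $B^2-4AC>0$ already exploited in the proof of Lemma \ref{lm3.2}: the form $C\mathbf{x}^2 - B\mathbf{xy} + A\mathbf{y}^2$ is non-degenerate and indefinite, so $\mathscr{Q}$ is a smooth affine conic of genus zero. Next I would read off the discriminant of $\mathscr{Q}$, namely $B^2 - 4AC$, and observe that by (\ref{eq3.3}) this equals $b^2 d$ up to the normalization used in (\ref{eq2.9}). Hence the quadratic field attached to $\mathscr{Q}$ is exactly $k=\mathbf{Q}(\sqrt{d})=\mathbf{Q}(\theta)$, and the natural order sitting inside $\mathcal{O}_k$ is $\Lambda = \mathbf{Z} + \mathbf{Z}\theta$, i.e.\ precisely the order featuring in the statement of Corollary \ref{cor1.3}.

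Having made these identifications, Lemmermeyer's theorem applies verbatim and yields
\begin{equation*}
\Sha(\mathscr{Q}) \;\cong\; Cl(\Lambda) \oplus Cl(\Lambda),
\end{equation*}
which is Lemma \ref{lm3.4}. The only potential obstacle I foresee is a bookkeeping issue: matching the normalization of the conic in \cite{Lem1} (which is typically written as $\mathbf{x}^2 - D\mathbf{y}^2 = N$) with the form (\ref{eq2.7}) used here, and ensuring that the order whose narrow class group appears in Lemmermeyer's formula is indeed $\mathbf{Z}+\mathbf{Z}\theta$ rather than the maximal order $\mathcal{O}_k$. This is a routine change of variables using the transformation $(\mathbf{x},\mathbf{y})\mapsto (2C\mathbf{x}-B\mathbf{y},\mathbf{y})$ which diagonalizes the quadratic form and exhibits the discriminant $B^2-4AC$ explicitly; once this is in place the isomorphism follows with no further work.
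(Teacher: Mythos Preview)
Your proposal is correct and follows essentially the same route as the paper: both reduce the claim to a direct application of Lemmermeyer's Theorem~11. The only cosmetic difference is that the paper specializes at the outset to $A=1$, $B=0$, $C=-D$ (i.e.\ $\theta=\sqrt{D}$) so that the conic is already in Pell form $\mathbf{y}^2-D\mathbf{x}^2=(-1)^k$, whereas you keep the general coefficients $A,B,C$ and diagonalize via a linear change of variables; your version is slightly more careful but the substance is identical.
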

\begin{proof}
Consider the order $\Lambda=\mathbf{Z}+\mathbf{Z}\theta$ in the ring of
integers in the real quadratic field $k=\mathbf{Q}(\sqrt{D})$. 
Since $\theta$ is the root of quadratic equation (\ref{eq2.3}),  we conclude 
that $A=1, B=0$ and $C=-D$.  Using (\ref{eq2.7}) one can write the Fermat-Pell 
conic $\mathscr{Q}$  in the form: 
\begin{equation}\label{eq3.11}
\mathbf{y}^2 - D\mathbf{x}^2 =(-1)^k. 
\end{equation}

\medskip
By the analogy between the Birch-Swinnerton-Dyer conjecture for elliptic curves
and the Dirichlet class number formula observed in 
 [Zagier 1991]  \cite[beginning of Section 4.2]{Zag1} and  proved for conics in [Lemmermeyer, around 2003]  \cite[Theorem 11]{Lem1}, 
the Shafarevich-Tate
group $\Sha (\mathscr{Q})$ of the conic (\ref{eq2.7}) is given by the
formula: 
\begin{equation}\label{eq3.12}
\Sha (\mathscr{Q})\cong Cl ~(\mathscr{Q})\oplus Cl ~(\mathscr{Q}),
 \end{equation}
where $Cl ~(\mathscr{Q})$ is the class group of $\mathscr{Q}$. 
But $Cl ~(\mathscr{Q})$ is equal to the narrow class group of the order $\Lambda$. 
Thus $\Sha (\mathscr{Q})\cong Cl ~(\Lambda)\oplus Cl ~(\Lambda)$. 
Lemma \ref{lm3.4} is proved. 
\end{proof}

\begin{lemma}\label{lm3.5}
$\Sha (\mathscr{E}(K))\cong Cl ~(\Lambda)\oplus Cl ~(\Lambda)$. 
\end{lemma}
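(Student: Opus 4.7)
The plan is to exploit the fiber bundle from Theorem~\ref{thm1.1} together with Lemma~\ref{lm3.4}. Since $V_{N,k}^0$ fibers over the Fermat-Pell conic $\mathscr{Q}$ with fiber the affine space $\mathbb{A}^r$ and structure group $\mathscr{E}_{tors}$, and since the Mordell-Weil group $\mathscr{E}(K) \cong \mathbf{Z}^r \oplus \mathscr{E}_{tors}$ embeds into this bundle via formula (\ref{eq3.7}) from the proof of Lemma~\ref{lm3.2}, the Hasse principle obstruction for $\mathscr{E}(K)$ ought to be controlled by that of the base conic $\mathscr{Q}$.

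First I would identify $\Sha(\mathscr{E}(K)) \cong \Sha(V_{N,k}^0)$ by using the functorial correspondence $F: \mathscr{E}(K) \mapsto \mathscr{A}_{RM}$ and the interpretation of $V_{N,k}^0$ as the parameter space of continued fractions representing such curves; in particular, the $\mathbf{Z}$-points of $V_{N,k}^0$ recover the Mordell-Weil data, while the structure group $\mathscr{E}_{tors}$ is inherited intact from $V_{N,k}^0$ into $\mathscr{E}(K)$. Second, I would show $\Sha(V_{N,k}^0) \cong \Sha(\mathscr{Q})$ through the pullback $\pi^{*}$ along $\pi : V_{N,k}^0 \to \mathscr{Q}$: since $\mathbb{A}^r$ satisfies the Hasse principle and has trivial $\Sha$, every local-global obstruction on the total space descends to one on the base, so $\pi^{*}$ is an isomorphism. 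Combining these two identifications with Lemma~\ref{lm3.4} gives
\begin{equation}
\Sha(\mathscr{E}(K)) \;\cong\; \Sha(V_{N,k}^0) \;\cong\; \Sha(\mathscr{Q}) \;\cong\; Cl(\Lambda) \oplus Cl(\Lambda).
\end{equation}

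The hard part will be the first identification $\Sha(\mathscr{E}(K)) \cong \Sha(V_{N,k}^0)$, since the standard Galois-cohomological definition of $\Sha$ for an elliptic curve does not, a priori, match the geometric notion attached to the Brock-Elkies-Jordan variety. Bridging this gap requires a careful use of the functor $F$ together with the structural results of \cite{N} invoked already in Section~3.1, and relies on the observation that both the affine fibers $\mathbb{A}^r$ and the finite structure group $\mathscr{E}_{tors}$ contribute trivially to the local-global obstruction, so that only the base conic $\mathscr{Q}$ carries the nontrivial part of $\Sha$.
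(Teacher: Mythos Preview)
Your proposal is correct and follows essentially the same approach as the paper: both use the identification of $K$-rational points of $\mathscr{E}(K)$ with integer points of the fiber bundle $(V_{N,k}^0,\mathscr{Q},\pi,\mathbb{A}^r)$ via formula (\ref{eq3.7}), then invoke the triviality of $\Sha(\mathbb{A}^r)$ on the fibers to reduce the local-global obstruction to the base conic $\mathscr{Q}$, and finally apply Lemma~\ref{lm3.4}. The only organizational difference is that you insert $\Sha(V_{N,k}^0)$ as an explicit intermediate object in the chain of isomorphisms, whereas the paper passes directly from $\mathscr{E}(K)$ to $\mathscr{Q}$; this is a matter of presentation, not substance.
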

\begin{proof}
Using formula (\ref{eq3.7}),  one can identify 
the $K$-rational points of the elliptic curve $\mathscr{E}(K)\cong \mathbf{Z}^r\oplus\mathscr{E}_{tors}$ with the integer
points of  the fiber bundle  $(V_{N,k}^0, \mathscr{Q}, \pi, \mathbb{A}^r)$. 
  Since $\mathbb{A}^r$ is a rational variety,  the group $\Sha (\mathbb{A}^r)$
is trivial.  We conclude therefore, that the failure of 
the Hasse principle for $\mathscr{E}(K)$ occurs  only in the base space $\mathscr{Q}$. 
On the other hand, Lemma \ref{lm3.4} says that $\Sha (\mathscr{Q})\cong Cl ~(\Lambda)\oplus Cl ~(\Lambda)$.
Thus $\Sha (\mathscr{E}(K))\cong Cl ~(\Lambda)\oplus Cl ~(\Lambda)$. 
Lemma \ref{lm3.5} is proved. 
\end{proof}

\bigskip
Corollary \ref{cor1.3} follows from Lemma \ref{lm3.5}.

\bigskip
\begin{remark}
The formula $\Sha (\mathscr{E}(K))\cong Cl ~(\Lambda)\oplus Cl ~(\Lambda)$
can be proved in purely algebraic terms \cite[Corollary 1.3]{Nik1}.  
However, the approach based 
on the Lemmermeyer's Lemma \ref{lm3.4} and geometry of the fiber bundle 
 $(V_{N,k}^0, \mathscr{Q}, \pi, \mathbb{A}^r)$ seems to be more elegant. 
\end{remark}

\section{Examples}
We shall consider two examples illustrating Theorem \ref{thm1.1}. 
They correspond to the    variety $V_{1,2}(\mathbf{C})$ 
 [Brock, Elkies \& Jordan 2019] \cite[Section 8]{BrElJo1}.

\subsection{Brock-Elkies-Jordan  variety $V_{1,2}(\mathbf{C})$}
Consider the  variety $V_{N,k}(\mathbf{C})$
with  $N=1$ and $k=2$. 
According to  [Brock, Elkies \& Jordan 2019] \cite[Section 8]{BrElJo1}, in this case 
the variety $V_{1,2}(\mathbf{C})$ is defined by the system of equations: 
\begin{equation}\label{eq4.1}
\left\{
\begin{array}{rl}
A(x_1x_2-2y_1x_1) =& Bx_1\\
 A(y_1^2x_1-y_1x_1x_2-x_2) =&    Cx_1\\
B(y_1^2x_1-y_1x_1x_2-x_2) =& C(x_1x_2-2y_1x_1).
\end{array}
\right.
\end{equation}

\bigskip
\begin{theorem}\label{thm4.1}
{\bf ([Brock, Elkies \& Jordan 2019] \cite[Section 8]{BrElJo1})}
The Brock-Elkies-Jordan  variety $V_{1,2}(\mathbf{C})$ has:

\medskip
(i) one component $[y_1,\overline{0,0}]$,  if $A=0\ne B$  or $[y_1,\overline{0, x_2}]$, 
if $A=B=0$; 

\smallskip
(ii) two components  $[y_1,\overline{0,0}]$ and 
\begin{equation}\label{eq4.2}
\left[y_1, \overline{-{2Ay_1+B\over Ay_1^2+By_1+C}, ~{2Ay_1+B\over A} }\right],  
\quad\hbox{if} ~A\ne0; 
 \end{equation}

\smallskip
(iii) three components    $[y_1,\overline{0,0}]$, (\ref{eq4.2}) and 
$[-{B\over 2A},\overline{x_1,0}]$, if $B^2=4AC$ and $A\ne 0$. 
\end{theorem}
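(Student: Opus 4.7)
The plan is a direct case analysis of the polynomial system (\ref{eq4.1}). The first equation factors as $x_1[A(x_2-2y_1)-B]=0$, which partitions $V_{1,2}(\mathbf{C})$ into the closed subvariety $\{x_1=0\}$ and the locus $\{A(x_2-2y_1)=B\}$, and I would handle these two branches separately, tracking how the roles of the three parameters $A,B,C$ degenerate.

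First, on the branch $x_1=0$, the second and third equations collapse to $Ax_2=0$ and $Bx_2=0$, so whenever $A$ or $B$ is nonzero we must have $x_2=0$, yielding the line $[y_1,\overline{0,0}]$ parameterized by $y_1$ alone. If $A=B=0$, however, $x_2$ remains unconstrained, and we pick up the two-parameter component $[y_1,\overline{0,x_2}]$ claimed in (i). This accounts for the first component in every regime, and for all components in case (i).

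On the other branch, assuming $A\ne 0$ and $x_1\ne 0$, I would solve the first equation as $x_2=(2Ay_1+B)/A$ and substitute into the second. A short computation using $y_1-x_2=-y_1-B/A$ yields
\[
-(Ay_1^2+By_1+C)\,x_1 \;=\; 2Ay_1+B,
\]
so away from the zeros of $Ay_1^2+By_1+C$ one obtains the expression (\ref{eq4.2}) with $y_1$ as the free parameter. The third equation is then automatic: the identity (\ref{eq2.6}) relating $A,B,C$ to the entries $E_{ij}$ ensures that any two of the three Brock-Elkies-Jordan equations force the third on the locus $x_1\ne 0$. This establishes (ii).

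For case (iii), the denominator $Ay_1^2+By_1+C$ has a double root $y_0:=-B/(2A)$ precisely when $B^2=4AC$, and at $y_0$ the numerator $2Ay_1+B$ also vanishes, so (\ref{eq4.2}) becomes indeterminate. Specializing $y_1=y_0$, the first equation collapses to $Ax_1x_2=0$; the sub-branch $x_2=0$ then turns the second equation into $x_1(Ay_0^2-C)=0$, which holds automatically since $Ay_0^2=B^2/(4A)=C$ under the discriminant condition. Hence $x_1$ becomes a free parameter and the extra component $[-B/(2A),\overline{x_1,0}]$ appears. The main obstacle is really just the bookkeeping: verifying that every claimed component actually lies in $V_{1,2}(\mathbf{C})$, that no further components hide in the overlaps, and that the division by $Ay_1^2+By_1+C$ does not introduce spurious solutions at its vanishing locus. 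The identity (\ref{eq2.6}) does most of the real work by making the third equation redundant wherever the first two are non-degenerate.
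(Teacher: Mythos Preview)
The paper does not supply its own proof of this theorem; it is quoted from Brock, Elkies and Jordan \cite[Section 8]{BrElJo1} and used as a black box in the examples. Your direct case analysis of the system (\ref{eq4.1}) is correct and is the natural way to verify the classification. One minor remark: the redundancy of the third equation on the branch $x_1\ne 0$, $A\ne 0$ does not require identity (\ref{eq2.6}); once the first equation gives $x_1x_2-2y_1x_1=Bx_1/A$ and the second gives $y_1^2x_1-y_1x_1x_2-x_2=Cx_1/A$, substituting both into the third yields $B\cdot Cx_1/A=C\cdot Bx_1/A$ identically. You might also note explicitly that when $A\ne 0$ and $B^2\ne 4AC$, the two simple roots of $Ay_1^2+By_1+C$ do \emph{not} produce extra components, since at such a root $2Ay_1+B\ne 0$ and the relation $-(Ay_1^2+By_1+C)x_1=2Ay_1+B$ has no solution; this closes the bookkeeping gap you flagged at the end.
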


\subsection{Arithmetic complexity $c(F(\mathscr{E}(K)))$}
Consider the following two families of  elliptic curves $\mathscr{E}(K)$.
\begin{example}\label{ex4.2}
{\bf (\cite[Section 1.4]{N})}
Let $b\ge 1$ be an integer. Denote by $\mathscr{E}_{CM}$ an elliptic curve 
with complex multiplication by the  integers of  the imaginary quadratic field 
$\mathbf{Q}(i\sqrt{b^2+2})$.  Then $F(\mathscr{E}_{CM})=\mathscr{A}_{\sqrt{b^2+2}}$
\cite[Theorem 1.4.1]{N}.  It is easy to see, that 
\begin{equation}\label{eq4.3}
\sqrt{b^2+2}=[b,~\overline{b, ~2b}]. 
\end{equation}

\smallskip\noindent
Therefore  $N=1$ and $k=2$,  i.e. the  $V_{1,2}(\mathbf{C})$ is 
 the  Brock-Elkies-Jordan  variety  corresponding 
 to the continued fraction (\ref{eq4.3}). 
Recall that  a classification of the connected components of the 
variety $V_{1,2}(\mathbf{C})$ is provided  by Theorem \ref{thm4.1}. 
It is clear,  that the connected components $[y_1,\overline{0,0}]$,
$[y_1,\overline{0, x_2}]$ or $[-{B\over 2A},\overline{x_1,0}]$ 
in Theorem \ref{thm4.1} must be excluded. 
Thus  the component $V_{1,2}^0$ containing 
 continued fraction  (\ref{eq4.3}) is  given by the formulas  (\ref{eq4.2}). 
To calculate dimension of the  component $V_{1,2}^0$,  notice that  the 
substitution
\begin{equation}\label{eq4.4}
\left\{
\begin{array}{ccl}
y_1 &=& uv\\
 A  &=&    1\\
 B  &=& 0\\
C &=& -v(u^2v+1), 
\end{array}
\right.
\end{equation}
brings  component  (\ref{eq4.2}) to  the form:
\begin{equation}\label{eq4.5}
[uv,~\overline{2u, ~2uv}]=\sqrt{v(u^2v+1)}. 
\end{equation}

\smallskip\noindent
The restriction $2u=b, ~v=2$ shows  that our fraction (\ref{eq4.3}) is contained 
in the  component $V_{1,2}^0$ described by   (\ref{eq4.5}). 
Moreover, the  $V_{1,2}^0$ is the maximal component with such a property. 
Since the $V_{1,2}^0$ is parametrized by two complex variables $u$ and $v$, 
we conclude that 
\begin{equation}\label{eq4.6}
\dim_{\mathbf{C}} V_{1,2}^0=2=c(F(\mathscr{E}_{CM})). 
\end{equation}
\end{example}

\medskip
\begin{example}\label{ex4.3}
{\bf (\cite[Section 6.2.4.2]{N})}
Let $b\ge 3$ be an integer. Denote by $\mathscr{E}(\mathbf{Q})$ an elliptic 
curve defined by  the affine equation
\begin{equation}\label{eq4.7}
y^2=x(x-1)\left(x-{b-2\over b+2}\right). 
\end{equation}
It is known (\cite[Section 6.2.4.2]{N}) that 
$F(\mathscr{E}(\mathbf{Q}))=\mathscr{A}_{{1\over 2}(b+\sqrt{b^2-4})}$.
The reader can verify  that 
\begin{equation}\label{eq4.8}
{b+\sqrt{b^2-4}\over 2}=[b-1,~\overline{1, ~b-2}]. 
\end{equation}
Thus the  $V_{1,2}(\mathbf{C})$ is  the  Brock-Elkies-Jordan  variety corresponding to the continued fraction(\ref{eq4.8}).
 Clearly, the connected components $[y_1,\overline{0,0}]$,
$[y_1,\overline{0, x_2}]$ or $[-{B\over 2A},\overline{x_1,0}]$ in Theorem \ref{thm4.1} 
are excluded.  Therefore  the connected component $V_{1,2}^0$ 
 containing periodic continued fraction  (\ref{eq4.8})  is  given by formula (\ref{eq4.2}). 
Using the  substitution
\begin{equation}\label{eq4.9}
\left\{
\begin{array}{ccl}
y_1 &=& u-1\\
 A  &=&    1\\
 B  &=& -u\\
C &=& 1, 
\end{array}
\right.
\end{equation}
one can write (\ref{eq4.2}) in the form:
\begin{equation}\label{eq4.10}
[u-1,~\overline{1, ~u-2}]={u+\sqrt{u^2-4}\over 2}. 
\end{equation}
The continued fractions (\ref{eq4.8}) and  (\ref{eq4.10}) 
are identical after the substitution $u=b$. Thus   (\ref{eq4.8}) parametrizes the 
 component  $V_{1,2}^0$. 
Moreover, the  $V_{1,2}^0$ is the maximal component with such a property. 
Since the $V_{1,2}^0$ depends on  one complex variable $u$, 
we conclude that 
\begin{equation}\label{eq4.11}
\dim_{\mathbf{C}} V_{1,2}^0=1=c(F(\mathscr{E}(\mathbf{Q})). 
\end{equation}
\end{example}
\begin{remark}\label{rmk4.4}
Formula (\ref{eq4.11}) is valid only for the odd values of $b\ge 3$ in (\ref{eq4.8}).   
 Indeed, the substitution $b=2k$ brings the LHS of (\ref{eq4.8}) to the form $k+\sqrt{k^2-1}$. 
 The latter is a special case of the RHS of the family (\ref{eq4.5}) with $u=k$ and $v=-1$. 
 Thus the even values of $b\ge 3$ must be excluded. 
\end{remark}

\medskip
\subsection{Ranks of elliptic curves $\mathscr{E}(K)$}
The ranks of elliptic curves $\mathscr{E}_{CM}$ and $\mathscr{E}(\mathbf{Q})$ 
can be evaluated  using formulas (\ref{eq1.2}), (\ref{eq4.6}) and (\ref{eq4.11}). 
\begin{example}
Let the  $\mathscr{E}_{CM}$ be as in example \ref{ex4.2}. The formulas  (\ref{eq1.2}) and 
(\ref{eq4.6}) say that 
\begin{equation}\label{eq4.12}
r(\mathscr{E}_{CM})= c(F(\mathscr{E}_{CM}))-1=2-1=1. 
\end{equation}
\end{example}
\begin{remark}
The arithmetic of  elliptic curves $\mathscr{E}_{CM}$ has been studied 
by  [Gross 1980] \cite{G}.  In particular, it was proved that 
$r(\mathscr{E}_{CM})=1$, whenever $\mathscr{E}_{CM}$ (satisfying some additional assumptions \cite[Section 1.4.2]{N}) 
 is an elliptic curve with 
complex multiplication by the integers of the field $\mathbf{Q}(\sqrt{-p})$,
where $p\equiv 3 \mod 8$ is a prime number  [Gross 1980] \cite[Theorem 22.4.2]{G}.
The reader can verify, that letting  $b=2k+1$ in formula (\ref{eq4.3}) 
implies  $b\equiv 3 \mod 8$.   Therefore formula (\ref{eq4.12}) 
is a generalization of  the result of   [Gross 1980] \cite[Theorem 22.4.2]{G}.
We refer the reader to \cite[Table 1.1]{N} for more examples. 
\end{remark}
\begin{example}\label{ex4.7}
Let the  $\mathscr{E}(\mathbf{Q})$ be as in example \ref{ex4.3}. 
One gets using  (\ref{eq1.2}),  (\ref{eq4.11}) and  remark \ref{rmk4.4},   that 
\begin{equation}\label{eq4.13}
r(\mathscr{E}(\mathbf{Q}))= c(F(\mathscr{E}(\mathbf{Q})))-1=1-1=0. 
\end{equation}

\smallskip\noindent
Thus  the rank of of the generic fiber $\mathscr{E}(\mathbf{Q})$
of the family (\ref{eq4.7}) must be equal to zero, see also \cite{Nik2}. 
In other words, for the infinitely many odd integers $b\ge 3$ the rank 
of the elliptic curve  $\mathscr{E}(\mathbf{Q})$ is zero. 
\end{example}
\begin{remark}
The reader is advised against   testing  the validity of formula (\ref{eq4.13})
using a vast  database  of  the analytic values of $r(\mathscr{E}(\mathbf{Q}))$.
While such calculations give correct results for the generic fibers of (\ref{eq4.7}),  
they are misleading on the special fibers and  the values of $b$ lying outside an admissible 
set of values.   
 \end{remark}

\section*{Ethical statement}
The author refrain from misrepresenting research results which could damage the trust in the journal, 
the professionalism of scientific authorship, and ultimately the entire scientific endeavour.

\bibliographystyle{amsplain}


\end{document}